\documentclass[a4paper,fleqn]{article} 

\usepackage[authoryear,longnamesfirst]{natbib} 

\usepackage{mathrsfs}

\usepackage{xspace} \usepackage{etoolbox} \def\tsc#1{\csdef{#1}{\textsc{\lowercase{#1}}\xspace}} 

\tsc{WGM} \tsc{QE} \tsc{EP} \tsc{PMS} \tsc{BEC} \tsc{DE} 

\usepackage{bm,color,graphicx}

\usepackage{amsmath,amssymb,amsthm,amsfonts}

\newcommand{\rar}{\rightarrow}  
\newcommand{\vett}{\mathbf}  
 \newcommand{\E}{\mathbb E\,}
\newcommand{\N}{\mathbb N} \renewcommand{\P}{\mathbb P} 
 \newcommand{\R}{\mathbb R} 
  
\newcommand{\Chi}{{\hbox{\rm 1 \kern-6.4truept l}}}

\theoremstyle{plain} \newtheorem{theorem}{Theorem}[section] 
\newtheorem{hyp}[theorem]{Hypothesis} 
 
\newtheorem{definition}[theorem]{Definition} 
\newtheorem{lemma}[theorem]{Lemma} 
\newtheorem{prop}[theorem]{Proposition} 
\newtheorem{example}[theorem]{Example} 
\newtheorem{remark}[theorem]{Remark} 

\title{Asymptotic Behaviour for Isotropic Pearson Random Walks} 

\author{ D. A. Bignamini\thanks{DiSAT, Università dell'Insubria, Via Valleggio 11, 22100 Como, Italy.}\thanks{Corresponding author. E-mail: \texttt{da.bignamini@uninsubria.it}} \and E. G. Casini\footnotemark[1] \and A. Martinelli\footnotemark[1] }

\date{} 

\begin{document} 

\maketitle 

\begin{abstract} In the Pearson random walk the direction of the $i$--th step is a random variable uniformly distributed on the $d$--dimensional sphere, and its length is a non-negative random variable. In a general framework, we are going to study the asymptotic behaviour of the Pearson random walks when the dimension $d$ goes to $+\infty$. Further, we investigate the same convergence result in a more general framework, where both the number and the lengths of the steps depend on the dimension. The results in the present paper can be applied to approximate the distribution of some classical Pearson random walk, for example, in the Dirichlet case. 
\end{abstract} 

\bigskip 

\noindent\textbf{Keywords:} Dirichlet random walks; Random flights; Beta distribution; Hyperuniform distribution.

\section{Introduction}
In the original formulation of random walks by Pearson \cite{Pearson:1905p18079} a particle chooses uniformly on the hypersphere $S^{d-1}$ the direction of its motion in $\R^d$ and performs $n$ independent steps of equal length. 
%\textcolor{blue}{TOGLIERE? Under these circumstances, the determination of the probability density of such finite sums of random vectors follows a standards formal procedure. The characteristic function for an individual vector is evaluated and since the vectors are independent random variables, the characteristic function of the sum is the product of the characteristic functions. Having determined the total characteristic function one merely takes its inverse Fourier transform to obtain the probability density of the sum in the form of an infinite integral. }
In this paper we study the random walk 
\[
\vett S_{d,n} := \sum_{i = 1}^{n} L_i\vett U_i^{(d)},
\]
where $\vett U_i^{(d)}$ is a random vector having uniform distribution on the Euclidean sphere in $\R^d$ that represents the direction of the $i$-step and $\vett L := ( L_1,L_2,\dots,L_n)$ is a non-negative random vector (the variables can be dependent), where $L_i$ represents the length of the $i$-step. These kinds of processes (and their variants) have been widely studied, and determining their distributions and probabilistic properties is very difficult; see, for instance, \cite{LeCaer:2010p16914}, \cite{Orsingher:2007p17062} and \cite{casini:2015p254}. We are interested in the case where the dimension $d$ is high; this case has many applications in physics, see \cite{Weiss:2002hf} for a review or \cite{Hughes:1995hd}. One motivating example is the following one.

\begin{example}[\cite{le_caer_two-step_2015}]
    	In this example, we consider the two-step random walk with symmetric Dirichlet random lengths studied in \cite{le_caer_two-step_2015}.\\
        We denote by $|\cdot|_2$ the Euclidean norm. In this case $\vett L = \left( L, 1 - L \right)$ where $L$ has a Dirichlet distribution symmetric with respect to $1/2$. Under these assumptions, the support of the random variable $D := \left | \vett L \right |_2 = \sqrt{1 - 2 L (1 - L)}$ is the interval $\left[ 1/\sqrt{2}, 1 \right]$, and its distribution function is given by
   \[
	F_D\left( r \right) = \P\left( D\le r \right) 
			= 2 F_L\left( \frac{1 + \sqrt{2 r^2 - 1}}{2} \right) - 1,\qquad r\in\left[1/\sqrt{2},1\right]
	\]
%In the particular case of the Dirichlet symmetric random walk with parameters $q$ the density of $D$ is given by
%\[
%f_D(r) = \frac{2^{2 - q}}{B\left( q, q \right)} \, \frac{\left( 1 - r^2 \right)^{q - 1}}{\sqrt{2 r^2 - 1}}.
%\]

%	For all $r\in \left( 1/\sqrt{2}, 1 \right)$, we have
%	\[
%	\begin{split}
%		\P\left( \left\| \rw{d}{2} \right\| \le r\right) &= \P\left(  \sqrt{1-2 L_1(1-L_1)} \le r \right)
%		= \P\left( L_1(1-L_1) \ge \frac{1 - r^2}{2} \right).
%	\end{split}
%	\]

%	We notice that, the distribution function of $L_1(1-L_1)$, for all $x\in (0,1/4)$, is given by
%	\[
%	\begin{split}
%	\P\left( L_1(1-L_1) \le x \right) &= \P\left( L_1 \in \left( \frac{ 1- \sqrt{1-4x}}{2}, \frac{ 1+ \sqrt{1-4x}}{2}\right)^c \right)\\
%	& = 2 - 2 F_L\left( \frac{ 1+ \sqrt{1-4x}}{2} \right).
%	\end{split}
%	\]
Furthermore, the distance of $\vett S_{d,2}$ from the origin is 
\[
\left| \vett S_{d,2} \right|_2 = \sqrt{1 - Y Z^{(d)}}
\]
where $Y := 4 L \left( 1 - L \right)$ and $Z^{(d)} = \left( 1 - \left< \vett U_1^{(d)} , \vett U_2^{(d)} \right>\right)/2  \sim {\rm Beta}\left( \frac{d - 1}{2}, \frac{d -1}{2} \right)$ are independent random variables. It is easy to prove that $Z^{(d)}$ converges in distribution to $1/2$ as $d \rar \infty$, and therefore $\left| \vett S_{d, 2} \right|_2$ converges in distribution to $\left|\vett L\right|_2$.

\end{example}
We emphasize that, if one wishes to prove the same result for a random walk with a number of steps greater than two, the calculations become significantly more complex. In general, it is very difficult to determine the distribution of the Euclidean norm of $\vett S_{d,n}$ when the number of steps exceeds 2, see for instance \cite{letac_piccioni:2014}. In Section \ref{sect-clas-case} we prove, in a general setting (see Hypothesis \ref{Hyp:base}), that the Euclidean norm of $\vett S_{d,n}$ converges almost surely (a.s.) to the Euclidean norm of the random vector $\vett L$ of the step lengths of the Pearson random walk when the dimension $d$ goes to $+\infty$. In Section \ref{sect:pos-gene}, we investigate the same convergence result in a more general framework (see Hypothesis \ref{Hyp:base-var}), where both the number and the lengths of the steps depend on the dimension. Within this setting, a decay condition on the mixed moments of the length random vector is required to establish both convergence in probability and almost sure convergence. It is worth noting that, in some cases, convergence in probability holds whereas a.s. convergence does not; see Example \ref{contro-esempio}. Finally, we present several additional examples illustrating applications of these results.

%\textcolor{blue}{TOGLIERE SE NON METTIAMO I GRAFICI: A simulated example of the asymptotic result is shown in Figure \ref{fig:gamma_RW}: we have represented the empirical cumulative distribution function (with 10000 simulations) of  $\left\| S_{d,n} \right\|$ for a random walk with step length distribution $Gamma\left( 1,1 \right)$, in thi case the limiting cumulative distribution function, that is the distribution of $\left\| \vett L \right\|$, is obtained by simulation with 1e5 simulations.}

%\begin{figure}[htb]
%	\begin{center}
%	\includegraphics[width=\textwidth]{figures/gamma_RW}
%\end{center}
%\caption{In figure are represented the ecdf of $\left\| S_{d,n} \right\|$ with $L_i \stackrel{iid}{\sim} Gamma\left( 1,1 \right)$. From the graphics is clear that the speed of convergence depends from the steps number.}
%\label{fig:gamma_RW}
%\end{figure}

\section{Notation}
Let $(\Omega,\mathcal{F}, \mathbb{P})$ be a complete probability space; all random variables considered from now on are defined on this probability space. Let $E$ be a Euclidean space. We denote by $\mathcal{B}(E)$ the Borel $\sigma$-field of $E$.  Let $X:\Omega\rightarrow E$ be a random variable. We denote by $\mathscr{L}(X)$ the law of $X$ and by $\mathbb{E}\left[X\right]$ the expectation of $X$ with respect to $\mathbb{P}$. Let $\mu$ be a probability measure on $(E;\mathcal{B}(E))$ we use the notation $X\sim \mu$ to denote $\mathscr{L}(X)=\mu$. Let $Y:\Omega\rightarrow E$ be another random variable, we use the notation $X\stackrel{d}{=}Y$ to denote that $\mathscr{L}(X)=\mathscr{L}(Y)$. Let $A\subseteq E$, we denote by $\Chi_A$ the indicator function of the set $A$. If $E=\R^d$ with $d\in\N$ and $\mathscr{L}(X)$ is absolutely continuous with respect to the $d$-dimensional Lebesgue measure, then we denote by $f_X$ its density, in this case we say that $X$ is an absolutely continuous random variable with density $f_X$. 

We denote by $|\cdot |_2$ the Euclidean norm in $\R^d$, namely
\[
|\vett x|_2:=\sqrt{\sum_{i=1}^dx_i^2},\qquad \vett x=(x_1,\ldots,x_d)\in\R^d,
\]
and we denote by $\left<\cdot,\cdot\right>$ the standard inner product in $\R^d$.
We denote by $S^{d-1}$ the unit sphere in $\R^d$, namely
\[
S^{d-1}:=\{\vett x\in \R^d\ : \ |\vett x|_2=1\}.
\]
We denote by $\Gamma$ the Euler gamma function and by $B$ the beta function.

%Now we introduce the definition of uniform distribution on the sphere $S^{d-1}$ of $\R^d$ with $d\in\N$. We remark that we can not follow the same approach of the distribution on the disc, since the sphere $S^{d-1}$ has dimension $d-1$ and so can not be absolutely continuous with respect to the Lebesgue measure in $\R^d$. Given an $\R^d$-valued random variable $\vett U$, intuitively $\vett U$ will have a uniform distribution on $S^{d-1}$ if it belongs to different arcs with the same probability if only if these have the same length. In the case $d=2$, it is quite easy to define a random variable $\vett U$ with this property, exploiting the polar coordinates: 
%\begin{equation}\label{2-unif}
%\vett U := 
%\begin{bmatrix}
	%U_1\\
	%U_2\\
%\end{bmatrix}
%\stackrel{d}{=}
%\begin{bmatrix}
%	\cos (\Theta)\\
%	\sin (\Theta)
%\end{bmatrix}
%\end{equation}
%where $\Theta \sim Unif(0,2\pi]$. Still, in the case $d=2$, it is well known that  
%\[
%\vett U\stackrel{d}{=}\left(\frac{Z_1}{|\vett Z|_2},\frac{Z_2}{|\vett Z|_2}\right)
%\]
%where $\vett Z=(Z_1,Z_2)$ with $Z_1$ and $Z_2$ i.i.d real gaussian measure with mean $0$ and variance $1$. This fact is the basic idea for the definition of uniform distribution on the sphere $S^{d-1}$ of $\R^d$ in the general case $d\in\N$.

\section{Properties of the uniform distribution on the sphere}

We recall the definition and we prove some properties of the uniform distribution on the sphere $S^{d-1}$ of $\R^d$, we refer to \cite{Fan}, for a detailed study on this topic. 

\begin{definition}\label{UnifSd}
Let $\vett V$ be a $\R^d$-valued random variable with $d \geq 2$. We say that $\vett V$ has uniform distribution on the sphere $S^{d-1}$ of $\R^d$ if there exist $Z_1,\ldots, Z_d$ \textnormal{i.i.d.} (independent and identically distributed) ${\rm Gauss}(0,1)$ real random variables such that
	\[
	\vett V\stackrel{d}{=} \left( \frac{Z_1}{\sqrt{\sum_{h=1}^{d} Z_h^2}}, \frac{Z_2}{\sqrt{\sum_{h=1}^{d} Z_h^2}},\dots, \frac{Z_d}{\sqrt{\sum_{h=1}^{d} Z_h^2}}\right).
	\]
	We use the notation $\vett V\sim {\rm Unif}(S^{d-1})$.
\end{definition}

Exploiting this definition, it is quite easy to calculate the marginal distribution of $\vett V$.

\begin{prop}\label{prop:legge}
    Let $\vett V=(V_1,\ldots, V_d)$ be a $\R^d$-valued random variable with $d \geq 2$ such that $\vett V\sim {\rm Unif}(S^{d-1})$. For every $h\in\{1,\ldots d\}$ the real random variable $V_h$ is absolutely continuous with density
    \[
    f_{V_h}(u):=\frac{1}{B(1/2,(d-1)/2)}\, \left( 1- u^2 \right)^{\frac{d-3}{2}} \Chi_{(-1,1)}(u).
    \]
    In particular for every $h\in\{1,\ldots,d\}$ and $k\in\N\cup \{0\}$ we have
\begin{equation}\label{eq:uniform_marg_moments}
\E\left[ V_h^{2k+1} \right]=0,\qquad\E\left[ V_h^{2k} \right] = \frac{B\left( \frac{1}{2} + k , \frac{d-1}{2} \right)}{B\left( \frac{1}{2}, \frac{d-1}{2} \right)}=\prod_{i=0}^{k-1}\frac{1/2+i}{d/2+i}.
	\end{equation}
For all $h\in\{1,\ldots,d\}$, $v \in (0,1)$ and $d \ge 4$
\begin{equation}
\P\left( |V_h| > v \right) \le 2\sqrt{\frac{d}{\pi}} \exp\left\{ -\frac{\left( d - 3 \right) v^2}{2} \right\},
\label{eq:v_upperbound}
\end{equation}
and for all $v \in (0,1/2)$ 
\begin{equation}
\P\left( |V_h| > v \right) \ge v \sqrt{\frac{d}{4 \pi}} \exp\left\{ - \frac{9 (d-3) v^2}{4} \right\}.
\label{eq:v_lowerbound}
\end{equation}

\end{prop}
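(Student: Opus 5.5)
By exchangeability of the coordinates it suffices to treat $h=1$. Using Definition~\ref{UnifSd}, write $V_1^2 = Z_1^2/(Z_1^2+W)$ with $W:=\sum_{h=2}^d Z_h^2\sim\chi^2_{d-1}$ independent of $Z_1^2\sim\chi^2_1$. The standard beta--gamma relation then gives $V_1^2\sim{\rm Beta}(1/2,(d-1)/2)$. Since $-\vett V\stackrel{d}{=}\vett V$, the law of $V_1$ is symmetric. The change of variables $u\mapsto u^2$ on $(0,1)$ therefore yields the density
\[
\frac{1}{B(1/2,(d-1)/2)}(1-u^2)^{\frac{d-3}{2}}
\]
on $(-1,1)$; the factor $2$ coming from symmetry cancels the Jacobian $2u$ against $u^{-1}$.

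The moments then follow directly. Odd moments vanish by symmetry. The even moments satisfy $\E[V_1^{2k}]=\E[(V_1^2)^k]=B(1/2+k,(d-1)/2)/B(1/2,(d-1)/2)$. Writing $B(a,b)=\Gamma(a)\Gamma(b)/\Gamma(a+b)$ and using $\Gamma(x+1)=x\Gamma(x)$ repeatedly produces the product $\prod_{i=0}^{k-1}(1/2+i)/(d/2+i)$.

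For the tail bounds, write $\P(|V_1|>v)=\frac{2}{B}\int_v^1(1-u^2)^{\frac{d-3}{2}}du$ with $B:=B(1/2,(d-1)/2)$.

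\emph{Upper bound.} Use $1-u^2\le e^{-u^2}$, so the integrand is at most $e^{-(d-3)u^2/2}$. Then bound the Gaussian tail: either by $\int_v^\infty e^{-(d-3)u^2/2}du\le e^{-(d-3)v^2/2}\int_0^\infty e^{-(d-3)w^2/2}dw=e^{-(d-3)v^2/2}\sqrt{\pi/(2(d-3))}$ (substituting $u=v+w$ and using $(v+w)^2\ge v^2+w^2$), or simply by $\int_v^1\le 1$. The constant then requires a lower bound on $1/B$, namely $1/B=\Gamma(d/2)/(\Gamma(1/2)\Gamma((d-1)/2))\le\sqrt{d/(2\pi)}$, via Gautschi/Wendel's inequality $\Gamma(x+1/2)/\Gamma(x)\le\sqrt{x}$ with $x=(d-1)/2$. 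Combining with the crude bound $\int_v^1\le 1-v\le1$ gives $\P(|V_1|>v)\le 2\sqrt{d/(2\pi)}\,e^{-(d-3)v^2/2}\le 2\sqrt{d/\pi}\,e^{-(d-3)v^2/2}$, which is what is claimed. The hypothesis $d\ge4$ ensures $(d-3)/2>0$, so the exponential estimate is meaningful.

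\emph{Lower bound.} Restrict the integral to $[v,2v]\subset(0,1)$, which is valid since $v<1/2$. This gives $\P(|V_1|>v)\ge\frac{2}{B}\,v\,(1-4v^2)^{\frac{d-3}{2}}$. For $x=4v^2\in(0,1)$ we need an inequality of the form $\log(1-x)\ge -cx$ with $c$ chosen so that $4c/2\le 9/4$, i.e.\ $c\le 9/8$. However, $\log(1-x)\ge -9x/8$ fails when $x$ is near $1$, so the window has to be chosen more carefully. Instead integrate over $[v,3v/2]$. Then $u^2\le 9v^2/4<9/16$, and on $x\in[0,9/16]$ the concavity bound $\log(1-x)\ge -x\cdot\log(16/7)/(9/16)\approx -1.47x$ holds. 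This gives $(1-u^2)^{(d-3)/2}\ge e^{-1.47\cdot\frac{9}{4}v^2\frac{d-3}{2}}\ge e^{-9(d-3)v^2/4}$. Hence $\P(|V_1|>v)\ge \frac{2}{B}\cdot\frac v2\, e^{-9(d-3)v^2/4}$. It remains to show $1/B\ge\sqrt{d/(4\pi)}$, i.e.\ $\Gamma(d/2)/\Gamma((d-1)/2)\ge\sqrt{d}/2$. This follows from Gautschi's lower bound $\Gamma(x+1/2)/\Gamma(x)\ge\sqrt{x-1/4}$ or similar, checking small $d$ by hand.

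The main obstacle is pinning down the constants: the gamma-ratio bounds in both directions, and choosing the integration window in the lower bound so that the exponent constant $9/4$ and the prefactor $\sqrt{d/(4\pi)}$ both come out. I would first try the window $[v,3v/2]$ together with Wendel's inequality, and adjust the window if the constants do not close.
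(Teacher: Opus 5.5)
Your proposal is correct and follows essentially the same route as the paper: the Beta law of $V_1^2$ plus symmetry gives the density and moments. For the tails you use $1-u^2\le e^{-u^2}$ for the upper bound, and for the lower bound the same window $[v,3v/2]$ with a linear lower bound on $\log(1-x)$ (the paper uses $e^{-2x}\le 1-x$ on $(0,0.78)$). The only difference is that the paper simply recalls $\sqrt{\pi/d}\le B(1/2,(d-1)/2)\le 2\sqrt{\pi/d}$ for $d\ge 4$, whereas you justify it via Wendel/Watson-type bounds on $\Gamma(x+1/2)/\Gamma(x)$; note that the upper tail needs an \emph{upper} bound on $1/B$, which is what you actually prove despite calling it a lower bound.
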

\begin{proof}
We prove the statement for $h=1$. We remark that $V_1^2 = Z_1^2/(Z_1^2+\sum_{i=2}^{d} Z_i^2)$, where $Z_1^2\sim\chi^2_{(1)}$ and $\sum_{i=2}^{d} Z_i^2\sim\chi^2_{(d-1)}$ are independent, then $V^2_1\sim {\rm Beta}\left( 1/2, (d-1)/2 \right)$. Hence by the symmetry of the distribution $V_1$, we obtain the statement.

Now we can prove \eqref{eq:v_upperbound} and \eqref{eq:v_lowerbound}. First of all we note that $|V_1|$ is absolutely continuous with density
\begin{equation}\label{densità}
f_{|V_1|}(u):=\frac{2}{B(1/2,(d-1)/2)}\, \left( 1- u^2 \right)^{\frac{d-3}{2}} \Chi_{(0,1)}(u).
\end{equation}
Further, we recall that for all $u_1 \in (0,1)$, $u_2\in (0,0.78)$ and $d \ge 4$ we have
\begin{equation}
		 1 - u_1 \le e^{- u_1},
	\qquad e^{- 2 u_2} \le 1 - u_2,
	\qquad 
	\sqrt{\frac{\pi}{d}} \le B\left( \frac{1}{2}, \frac{d - 1}{2} \right) \le 2 \sqrt{\frac{\pi}{d}}.
	\label{eq:exp_ineq}
\end{equation}

Hence, for every $v\in (0,1/2)$ and $d \ge 4$ by \eqref{eq:exp_ineq}, we have
\begin{align*}
    \mathbb P(|V_1| > v) &= \frac{2}{B(1/2,(d-1)/2)} \int_v^1\left( 1- u^2 \right)^{\frac{d-3}{2}} d u
                \geq \frac{2}{B(1/2,(d-1)/2)}\int_v^{3v/2}\left( 1- u^2 \right)^{\frac{d-3}{2}}du\\
                &\geq \frac{v}{B(1/2,(d-1)/2)}\left(1-\frac{9v^2}{4}\right)^{\frac{d-3}{2}}
                \ge v \sqrt{\frac{d}{4 \pi}} \exp\left\{ - \frac{9 (d-3) v^2}{4} \right\}.
\end{align*}

The upper bound \eqref{eq:v_upperbound} follows immediately from \eqref{densità} and \eqref{eq:exp_ineq}.

\end{proof}

%\begin{prop}\label{thm:inv-rot}
	% Let $\vett U$ be a $\R^d$-valued random variable with $d\in N$ such that $\vett U\sim Unif(S^{d-1})$. Then $R\vett U \stackrel{d}{=} \vett U$ for every element $R$ of the orthogonal gorup of $\R^d$.
%\end{prop}
%\begin{proof}
	%The proof is easy and it follows directly from the analogous invariance for the gaussian random vector. By Definition \ref{UnifSd} of $\vett U$ it sufficient to prove that the vectors $\left( G \vett Z, \left< \vett Z , \vett Z \right>  \right)$ and $\left( \vett Z, \left< \vett Z , \vett Z \right>  \right)$ have the same distribution where $\vett Z=(Z_1,\ldots, Z_d)$ and $Z_1,\ldots, Z_d$ are i.i.d. $Gauss(0,1)$ real random variable. For all $\vett t \in \R^n$ and $v \in \R$, we have
	%\[
	%\begin{split}
	%	\E\left[ \exp\left\{ \vett t' \Gamma \vett Z + v \vett Z' \vett Z \right\} \right] &= \E\left[ \exp\left\{ \vett t' \Gamma \vett Z + v \vett Z' \Gamma' \Gamma \vett Z \right\} \right] 
	%	= \E\left[ \exp\left\{ \vett t' \vett Z + v \vett Z' \vett Z \right\} \right] 
	%\end{split}
	%\]
	%so they have the same joint distribution.
%\end{proof}

\begin{prop}
	 Let $\vett V=(V_1,\ldots, V_d)$ and $\vett W=(W_1,\ldots, W_d)$  be two  \textnormal{i.i.d.}  ${\rm Unif}(S^{d-1})$ random variables with $d\geq 2$. Then
     \begin{equation}\label{eq:d-scal}
     \left<\vett V,\vett W\right>\stackrel{d}{=} V_1
    \end{equation}
\end{prop}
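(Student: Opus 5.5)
We plan to condition on $\vett W$ and use the rotational invariance of the uniform distribution on $S^{d-1}$, which follows from Definition \ref{UnifSd}. The first step is to show that for every orthogonal $d\times d$ matrix $R$ we have $R\vett V\stackrel{d}{=}\vett V$. Write $\vett V\stackrel{d}{=}\vett Z/|\vett Z|_2$, where $\vett Z=(Z_1,\ldots,Z_d)$ has i.i.d. ${\rm Gauss}(0,1)$ components. Then $R\vett Z$ is again a standard Gaussian vector, since its covariance is $RR^T=I$. Moreover $|R\vett Z|_2=|\vett Z|_2$. Therefore
\[
R\vett V\stackrel{d}{=}\frac{R\vett Z}{|R\vett Z|_2}\stackrel{d}{=}\frac{\vett Z}{|\vett Z|_2}\stackrel{d}{=}\vett V .
\]
In particular, for each fixed $\vett w\in S^{d-1}$ we have $\left<\vett V,\vett w\right>\stackrel{d}{=}V_1$. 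To see this, choose an orthogonal $R_{\vett w}$ whose first row is $\vett w^T$, so that $R_{\vett w}\vett w=\vett e_1$. Then
\[
\left<\vett V,\vett w\right>=\left<R_{\vett w}\vett V,\vett e_1\right>=(R_{\vett w}\vett V)_1\stackrel{d}{=}V_1 .
\]

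Next we use the independence of $\vett V$ and $\vett W$ together with Fubini's theorem. For every Borel set $A\subseteq\R$,
\[
\P\left(\left<\vett V,\vett W\right>\in A\right)=\int_{S^{d-1}}\P\left(\left<\vett V,\vett w\right>\in A\right)\,\mathscr{L}(\vett W)(d\vett w)=\int_{S^{d-1}}\P(V_1\in A)\,\mathscr{L}(\vett W)(d\vett w)=\P(V_1\in A).
\]
This gives \eqref{eq:d-scal}.

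The only point needing care is measurability in the conditioning step. It is harmless, because $(\vett v,\vett w)\mapsto\Chi_A(\left<\vett v,\vett w\right>)$ is jointly Borel and the joint law is the product measure $\mathscr{L}(\vett V)\otimes\mathscr{L}(\vett W)$. We also never need $R_{\vett w}$ to depend measurably on $\vett w$, since the inner probability equals $\P(V_1\in A)$ for every $\vett w$ separately. An alternative, which avoids choosing rotations, is to compute characteristic functions. Conditionally on $\vett W=\vett w$, the expectation $\E\big[e^{it\left<\vett V,\vett w\right>}\big]$ depends only on $|\vett w|_2=1$, by the rotation invariance above. The plan above is already complete, though, and I do not expect a real obstacle.
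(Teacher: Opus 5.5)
Your proposal is correct and follows essentially the same route as the paper: rotation invariance gives $\left<\vett V,\vett w\right>\stackrel{d}{=}V_1$ for each unit vector $\vett w$, and independence lets you integrate this against $\mathscr{L}(\vett W)$. The only differences are that you derive the rotation invariance from the Gaussian representation in Definition \ref{UnifSd} and address measurability, whereas the paper simply asserts the invariance.
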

\begin{proof}
Since  ${\rm Unif}(S^{d-1})$ is rotation invariant, namely $R\vett V \stackrel{d}{=} \vett V$ for every element $R$ of the orthogonal group of $\R^d$, then for every $\vett v\in\R^d$ such that $|\vett v|_2=1$  we have
\begin{equation}\label{rot-inv}
\left<\vett V, \vett v\right> \stackrel{d}{=} \left<\vett V,\vett e_1\right> 
\end{equation}
where $\vett e_1$ is the first element of the canonical basis of $\R^d$. Since $\vett V$ and $\vett W$ are independent, by \eqref{rot-inv} and standard properties of conditional expectation, for every $A\in\mathcal{B}(\R)$ we have
\begin{align*}
\mathbb P\left(\left<\vett V,\vett W\right>\in A\right)&=\int_{\R^d}\mathbb P\left(\left<\vett V,\vett v\right>\in A|\vett W=v\right)\mathscr{L}(\vett W)(dv)\\
&=\int_{\R^d}\mathbb P\left(\left<\vett V,\vett v\right>\in A\right)\mathscr{L}(\vett W)(dv)\\
&=\int_{\R^d}\mathbb P\left(\left<\vett V,\vett e_1\right>\in A\right)\mathscr{L}(\vett W)(dv)\\
&=\mathbb P\left(\left<\vett V,\vett e_1\right>\in A\right)\\
&=\mathbb P\left(V_1\in A\right).
\end{align*}

\end{proof}

\section{Asymptotic results for classical Pearson random walks}\label{sect-clas-case}
In this section, we study the asymptotic behaviour of the Euclidean norm of the classical Pearson random walk in $\R^d$ when $d$ goes to $+\infty$. First, we state the assumptions under which we will work.

\begin{hyp}\label{Hyp:base}
    Let $n\in\N$.
    \begin{enumerate}
        \item Let $\vett U^{(d)}_1,\ldots, \vett U^{(d)}_n$ be  \textnormal{i.i.d.}  ${\rm Unif}(S^{d-1})$ $\R^d$-valued random variables with $d\geq 2$, see Definition \ref{UnifSd}.
        \item\label{hyp:base2} Let $\vett L = \left(L_1,\ldots, L_n\right)$ be a random vector of non-negative random variables having distribution independent of $d$.
        \item\label{Hyp:base3} $\vett U^{(d)}_1,\ldots, \vett U^{(d)}_n$ and $\vett L$ are independent.   
    \end{enumerate}
\end{hyp}

We denote by $\vett S_{d,n}$ the $\R^d$-valued random variable defined by
\[
\vett S_{d,n}:=\sum_{k=1}^nL_k\vett U^{(d)}_k.
\]
We are going to study the asymptotic behaviour of $|\vett S_{d,n}|_2$ when $d$ goes to $+\infty$. First of all, noting that for every $k\in\{1,\ldots n\}$ we have
\begin{equation}\label{norma-unit}
|\vett U_k|^2_2=1,\qquad \mathbb P\mbox{-a.s.}
\end{equation}
we deduce 
\begin{equation}\label{formula-Sdn}
|\vett S_{d,n}|_2^2=\left<\vett S_{d,n},\vett S_{d,n}\right>=\sum_{k=1}^nL_k^2+\sum_{i\neq j\in\{1,\ldots, n\}}L_iL_j\left<\vett U^{(d)}_i,\vett U^{(d)}_j\right>.
\end{equation}
Since we are going to study almost sure convergence, we introduce a common probability space where all the random variables of the framework are defined:
%it is necessary to ensure that the elements of the sequence $\{|\vett S_{d,n}|_2\}_{d\geq 2}$ can be defined on the same fixed probability space $(\Omega,\mathcal{F},\mathbb P)$. 
let $\left\{Z_{i,h}\, :\, i\in \{1,\dots, n\}, h \in\N \right\}$ be a family of i.i.d. ${\rm Gauss}(0,1)$ real valued random variables defined on the same complete probability space $(\Omega,\mathcal{F},\mathbb P)$. In view of Definition \ref{UnifSd}, for every $d\geq 2$ and $i\in\{1,\ldots, n\}$ setting
\begin{equation}\label{eq:costruzioneSd}
\vett U^{(d)}_i := \left( \frac{Z_{i,1}}{\sqrt{\sum_{h=1}^{d} Z_{i,h}^2}}, \frac{Z_{i,2}}{\sqrt{\sum_{h=1}^{d} Z_{i,h}^2}},\dots, \frac{Z_{i,d}}{\sqrt{\sum_{h=1}^{d} Z_{i,h}^2}}\right),
\end{equation}
we have a family of real random variables with the above properties and $\{|\vett S_{d,n}|_2\}_{d\geq 2}$, defined as above, is a sequence of real valued random variables defined on $(\Omega,\mathcal{F},\mathbb P)$.

\begin{remark}\label{rmk:gen}$ $
\begin{enumerate}
    \item We emphasize that Definition \ref{UnifSd} cannot be generalized to define a uniform distribution on the sphere of an infinite-dimensional space. This fact is closely related to the behaviour of Gaussian measures in infinite-dimensional settings (see \cite{boogie}) and to the absence of non-degenerate rotation-invariant measures on an infinite-dimensional separable Hilbert space $H$ (see \cite[Theorem 1.5]{Kuku}).  Due to this reason, we can not study an infinite-dimensional version of $\vett S_{d,n}$, but we can only study the behaviour of $|\vett S_{d,n}|_2$ when $d$ goes to $+\infty$.

    \item By construction the families 
    \[
    \left\{\vett U^{(d)}_i\, :\, i\in\{1,\ldots,n\}\right\}_{d\geq 2 }
    \]
    are not independent. However, it is possible to modify the framework to make the families independent, indeed it is sufficient to take a family $\left\{Z_{i,h,k}\, :\, i\in \{1,\dots, n\},\, h \geq 1,\, k\in\N \right\}$ \textnormal{i.i.d.} ${\rm Gauss}(0,1)$  defined on the same probability space $(\Omega,\mathcal{F},\mathbb P)$ and set
    \begin{equation*}
U^{(d)}_i=\left( \frac{Z_{i,1,d}}{\sqrt{\sum_{h=1}^{d} Z_{i,h,d}^2}}, \frac{Z_{i,2,d}}{\sqrt{\sum_{h=1}^{d} Z_{i,h,d}^2}},\dots, \frac{Z_{i,d,d}}{\sqrt{\sum_{h=1}^{d} Z_{i,h,d}^2}}\right)
\end{equation*}
for every $d\geq 2$ and $i\in\{1,\ldots, n\}$.

    We note that, even in this case, in general $\left\{|\vett S_{d,n}|_2^2\right\}_{d \geq 2}$ are not independent, since  $\vett L$ is a fixed random vector independent of $d$. In the next section, we are going to consider also the case where $\vett L$ may depend on $d$ and $\left\{|\vett S_{d,n}|_2^2\right\}_{d \geq 2}$ could be a sequence of independent random variables, see Example \ref{contro-esempio}.
\end{enumerate}

\end{remark}

\begin{theorem}\label{thm: as}
    Assume Hypothesis \ref{Hyp:base} holds. Then 
    \begin{equation}\label{conv as}
        \lim_{d\rightarrow+\infty}|\vett S_{d,n}|_2^2=|\vett L|_2^2,\qquad \mathbb P\mbox{-a.s}.
    \end{equation}   
\end{theorem}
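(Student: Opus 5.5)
By \eqref{formula-Sdn}, the claim reduces to showing that for each fixed pair $i\neq j$ in $\{1,\ldots,n\}$,
\[
\lim_{d\to+\infty}\left<\vett U^{(d)}_i,\vett U^{(d)}_j\right>=0,\qquad \mathbb P\mbox{-a.s.}
\]
Indeed, $\vett L$ does not depend on $d$ and each $L_iL_j$ is a finite random variable. So on the intersection of the finitely many full-measure events where the inner products vanish, the cross sum in \eqref{formula-Sdn} tends to $0$, and $|\vett S_{d,n}|_2^2\to\sum_k L_k^2=|\vett L|_2^2$. The independence of $\vett L$ from the directions (Hypothesis \ref{Hyp:base}(\ref{Hyp:base3})) is therefore not even needed for this step; only the finiteness of $\vett L$ matters.

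To prove the a.s.\ convergence of the inner products, I would use the explicit construction \eqref{eq:costruzioneSd}. Writing $R_{i,d}:=\sum_{h=1}^d Z_{i,h}^2$, we have
\[
\left<\vett U^{(d)}_i,\vett U^{(d)}_j\right>=\frac{\frac1d\sum_{h=1}^d Z_{i,h}Z_{j,h}}{\sqrt{\frac{R_{i,d}}{d}\cdot\frac{R_{j,d}}{d}}}.
\]
By the strong law of large numbers applied to the i.i.d.\ sequences $(Z_{i,h}^2)_h$ with mean $1$, we get $R_{i,d}/d\to1$ a.s. For $i\neq j$, the products $(Z_{i,h}Z_{j,h})_h$ are i.i.d.\ with mean $0$ and finite variance $1$, so again by the SLLN $\frac1d\sum_{h\le d}Z_{i,h}Z_{j,h}\to 0$ a.s. The quotient therefore tends to $0$ a.s.

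This is where the common probability space built from the family $\{Z_{i,h}\}$ is essential. Because the vectors $\vett U^{(d)}_i$ for different $d$ are nested, the SLLN applies directly. This coupling is the only potentially delicate point, and it is handled by the construction.

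If one instead wanted a proof using only the distributional identity \eqref{eq:d-scal}, for instance under the independent-in-$d$ construction of Remark \ref{rmk:gen}, I would use Borel--Cantelli. By \eqref{eq:d-scal} and \eqref{eq:v_upperbound}, for every $v\in(0,1)$,
\[
\mathbb P\left(\left|\left<\vett U^{(d)}_i,\vett U^{(d)}_j\right>\right|>v\right)\le 2\sqrt{d/\pi}\,e^{-(d-3)v^2/2},
\]
which is summable in $d$. Borel--Cantelli then gives $\limsup_d|\langle\cdot,\cdot\rangle|\le v$ a.s., and letting $v\downarrow0$ along a countable sequence yields the limit $0$ a.s. This argument is valid for any coupling, so I would present it as the main proof. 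The SLLN argument could appear as a remark.
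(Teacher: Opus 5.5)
Your main argument is correct and essentially the paper's proof: Borel--Cantelli using \eqref{eq:d-scal} and the summable tail bound \eqref{eq:v_upperbound}, then intersecting the finitely many full-measure events and passing to the limit in \eqref{formula-Sdn}. Your SLLN alternative is also valid, but only for the nested construction \eqref{eq:costruzioneSd}, which is why the construction-free Borel--Cantelli route is the right choice for the main proof.
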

\begin{proof}
First of all, we prove that, for every $i,j\in\{1,\ldots, n\}$ such that $i\neq j$ we have
\begin{equation}\label{conv as-U}
        \lim_{d\rightarrow+\infty}\left|\left< \vett U_i^{(d)} , \vett U_j^{(d)} \right> \right|= 0,\qquad \mathbb P\mbox{-a.s},
    \end{equation}
but by \eqref{eq:v_upperbound} and \eqref{eq:d-scal}, for every $i,j\in\{1,\ldots, n\}$ such that $i\neq j$ and for every $\varepsilon\in (0,1)$ (see \eqref{norma-unit}) we deduce
    \begin{align*}
        \sum_{d=2}^{+\infty}\P\left( \left|\left< \vett U_i^{(d)} , \vett U_j^{(d)} \right> \right|> \varepsilon \right)\leq 2+\sum_{d=4}^{+\infty}2\sqrt{\frac{d}{ \pi}} \exp\left\{ - \frac{ (d-3) \varepsilon^2}{2} \right\},
    \end{align*}
then, by the Borel-Cantelli lemma, \eqref{conv as-U} holds, in particular there exists a countable family \\$\{\Omega_{i,j}\}_{i\neq j\in\{1,\ldots,n\}}\subseteq \mathcal{F}$ such that $\mathbb P(\Omega_{i,j})=1$ and for every $\omega\in\Omega_{i,j}$ we have
    \begin{align}\label{eq:limit}
        \lim_{d\rightarrow +\infty}\left|\left< \vett U^{(d)}_i(\omega) , \vett U^{(d)}_j(\omega) \right> \right|=0.
    \end{align}
    Letting $\Omega_0 = \cap_{i\neq j\in\{1,\ldots, n\}}\Omega_{i,j}$, we have $\P(\Omega_0)=1$ and  for every $\omega\in\Omega_{0}$ and $i,j\in\{1,\ldots, n\}$ such that $i\neq j$ formula \eqref{eq:limit} holds. Finally, noting that $\vett L$ is independent of $d$ (see Hypothesis \ref{Hyp:base}), by \eqref{formula-Sdn} and \eqref{eq:limit} we get
    \begin{align*}
        \lim_{d\rightarrow +\infty} \left| |\vett S_{d,n}(\omega)|_2^2-|\mathbf{L(\omega)}|^2_2 \right|\leq \sum_{i\neq j\in\{1,\ldots, n\}}L_i(\omega)L_j(\omega) \lim_{d\rightarrow +\infty} \left|\left<\vett U^{(d)}_i(\omega),\vett U^{(d)}_j(\omega)\right>\right|=0,
    \end{align*}
for every $\omega\in\Omega_{0}$ and therefore \eqref{conv as} holds.
\end{proof}
\begin{remark}
    We stress that the proof of the previous theorem does not depend on how $\vett U_1^{(d)},\ldots \vett U_n^{(d)}$ are constructed. 
\end{remark}

\begin{remark}\label{rmk} $ $
 Under stronger assumptions on the tails of $L_i L_j$ (for example, an exponential decay), it is possible to prove an exponential decay for $|\vett S_{d,n}|_2^2 - |\mathbf{L}|_2^2$; see Lemma \ref{lem:exp-tot} (with $n_d = n$). However, in general, such exponential decay fails. Indeed, although the convergence rate of $\langle \vett U_1, \vett U_2 \rangle$ to $0$ is exponential, when multiplied by a random variable $L$, the tail behaviour of $L$ can significantly modify the convergence rate of $L_iL_J \langle \vett U^{(d)}_1, \vett U^{(d)}_2 \rangle$; see Example \ref{ex:rate}.
\end{remark}

If we have additional conditions on the moments of $\vett L$, then we can prove further convergence results.

\begin{theorem}\label{thm: momenti}
    Assume Hypothesis \ref{Hyp:base} holds and there exists $m\in\N$ such that $\E \left[L_k^{2m}\right]<+\infty$ for every $k\in \{1,\ldots, n\}$. Then
    \begin{equation}\label{eq:E-conv}
    \lim_{d\rightarrow +\infty}\mathbb E\left[\left||\vett S_{d,n}|_2^2-|\mathbf L|_2^2\right|^m\right]=0.
    \end{equation}
\end{theorem}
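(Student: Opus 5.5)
By \eqref{formula-Sdn}, the difference to control is
\[
|\vett S_{d,n}|_2^2-|\vett L|_2^2=\sum_{i\neq j}L_iL_j\left<\vett U^{(d)}_i,\vett U^{(d)}_j\right>,
\]
a finite sum of $n(n-1)$ terms. I would first bound the $m$-th moment of this sum by the moments of the individual terms and then exploit independence between $\vett L$ and the directions. The elementary convexity inequality $|\sum_{k=1}^N a_k|^m\le N^{m-1}\sum_{k=1}^N|a_k|^m$ with $N=n(n-1)$ gives
\[
\E\left[\left||\vett S_{d,n}|_2^2-|\vett L|_2^2\right|^m\right]\le (n(n-1))^{m-1}\sum_{i\neq j}\E\left[L_i^mL_j^m\left|\left<\vett U^{(d)}_i,\vett U^{(d)}_j\right>\right|^m\right].
\]

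Next I would use Hypothesis \ref{Hyp:base}\eqref{Hyp:base3}: $\vett L$ is independent of $(\vett U^{(d)}_i,\vett U^{(d)}_j)$, so each expectation factorizes as $\E[L_i^mL_j^m]\,\E[|\langle \vett U^{(d)}_i,\vett U^{(d)}_j\rangle|^m]$. The first factor is finite and independent of $d$. This is the only place where the moment assumption enters: by Cauchy--Schwarz, $\E[L_i^mL_j^m]\le (\E[L_i^{2m}])^{1/2}(\E[L_j^{2m}])^{1/2}<+\infty$.

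For the second factor I would use \eqref{eq:d-scal}, which gives $\langle \vett U^{(d)}_i,\vett U^{(d)}_j\rangle\stackrel{d}{=}V_1$ with $\vett V\sim{\rm Unif}(S^{d-1})$. By Jensen/Lyapunov and \eqref{eq:uniform_marg_moments} with $k=m$,
\[
\E[|V_1|^m]\le \left(\E[V_1^{2m}]\right)^{1/2}=\left(\prod_{i=0}^{m-1}\frac{1/2+i}{d/2+i}\right)^{1/2}\longrightarrow 0\quad (d\to+\infty),
\]
and the rate is $O(d^{-m/2})$. Alternatively, one can note that $|V_1|\le 1$ and $V_1\to 0$ in probability by \eqref{eq:v_upperbound}, then conclude by bounded convergence.

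Combining the three steps, the right-hand side is a finite constant times a quantity tending to zero, so \eqref{eq:E-conv} follows. I do not expect a genuine obstacle. The only point needing care is to use the factorization coming from independence rather than, say, Hölder inequality against the almost sure convergence of Theorem \ref{thm: as}, since the latter would require higher moments than $2m$. Done this way, the hypothesis $\E[L_k^{2m}]<+\infty$ is exactly what is needed.
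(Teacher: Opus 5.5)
Your proof is correct and follows the paper's argument essentially step by step: the Jensen/power-mean bound with factor $[n(n-1)]^{m-1}$, factorization by independence, Cauchy--Schwarz on the length moments, and the moment formula for $V_1$, which gives the $O(d^{-m/2})$ rate. One correction to your closing remark: the route through Theorem \ref{thm: as} does not need more than $2m$ moments, since $|\langle \vett U_i^{(d)},\vett U_j^{(d)}\rangle|\le 1$ gives the $d$-independent integrable dominant $\bigl(\sum_{i\neq j}L_iL_j\bigr)^m$, so dominated convergence applies under the same hypothesis (though that route gives no rate).
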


\begin{proof}
By \eqref{formula-Sdn} and the Jensen inequality, we have
\begin{align*}
    \mathbb E\left[\left||\vett S_{d,n}|_2^2-|\mathbf L|_2^2\right|^m\right]&=\mathbb E\left[\left| \sum_{i\neq j\in\{1,\ldots, n\}}L_iL_j\left<\vett U_i^{(d)},\vett U_j^{(d)}\right>\right|^m\right]\\
    &\leq [n(n-1)]^{m-1}\sum_{i\neq j\in\{1,\ldots, n\}}\mathbb E\left[|L_iL_j\left<\vett U_i^{(d)},\vett U_j^{(d)}\right>|^m\right],
\end{align*}
by the Hypothesis \ref{Hyp:base}\eqref{Hyp:base3} and the H\"older inequality we obtain
\begin{align*}
    \mathbb E\left[\left||\vett S_{d,n}|_2^2-|\mathbf L|_2^2\right|^m\right]&\leq [n(n-1)]^{m-1}\sum_{i\neq j\in\{1,\ldots, n\}}\mathbb E\left[|L_iL_j|^m\right]\mathbb E\left[|\left<\vett U_i^{(d)},\vett U_j^{(d)}\right>|^m\right]\\
    &\leq  [n(n-1)]^{m-1}\sum_{i\neq j\in\{1,\ldots, n\}}\sqrt{\mathbb E\left[|L_i|^{2m}\right]\mathbb E\left[|L_j|^{2m}\right]\mathbb E\left[|\left<\vett U_i^{(d)},\vett U_j^{(d)}\right>|^{2m}\right]}.
\end{align*}
Finally, by \eqref{eq:uniform_marg_moments} and \eqref{eq:d-scal}, there exists a constant $c_{m}>0$ such that 
\[
\mathbb E\left[\left||\vett S_{d,n}|_2^2-|\mathbf L|_2^2\right|^m\right]\leq c_{m}\frac{[n(n-1)]^{m-1}}{(d-1)^{m/2}}\sum_{i\neq j\in\{1,\ldots, n\}}\sqrt{\mathbb E\left[|L_i|^{2m}\right]\mathbb E\left[|L_j|^{2m}\right]},
    \]
    hence \eqref{eq:E-conv} holds.
\end{proof}

\section{Asymptotic results in the general case}\label{sect:pos-gene}
In this section, we propose a generalization of the framework of the present paper to the case where the number and length of steps in the Pearson random walk also depend on the dimension $d \in \mathbb{N}$ of the sphere from which the directions are sampled.

\begin{hyp}\label{Hyp:base-var}
 Let $\{n_d\}_{d\geq 2}\subseteq \N$ be a non decreasing sequence. For every $d\geq 2$
    \begin{enumerate}
        \item let $\vett U^{(d)}_1,\ldots \vett U^{(d)}_{n_d}$ be \textnormal{i.i.d.} ${\rm Unif}(S^{d-1})$ random variables;
	\item let $\vett L^{(d)} = \left( L^{(d)}_1,\ldots, L^{(d)}_{n_d} \right)$ be a random vector of non-negative random variables;
        \item $\vett U^{(d)}_1,\ldots, \vett U^{(d)}_{n_d}$ and $\vett L^{(d)}$ are independent.    
    \end{enumerate}
\end{hyp}

As before, we denote by $\vett S_{d,n_d}$ the $\R^d$-valued random variable defined by
\[
\vett S_{d,n_d}:=\sum_{k=1}^{n_d}L^{(d)}_k\vett U^{(d)}_k.
\]
Moreover, we note that
\begin{equation}\label{formula-Sdn-Var}
|\vett S_{d,n_d}|_2^2=\left<\vett S_{d,n_d}, \vett S_{d,n_d}\right>=\sum_{k=1}^{n_d} \left(L^{(d)}_k\right)^2+\sum_{i\neq j\in\{1,\ldots, n_d\}}L^{(d)}_iL^{(d)}_j\left<\vett U_i^{(d)},\vett U_j^{(d)}\right>.
\end{equation}

As in the previous section, we can introduce a common probability space where all the random variables of the framework are defined using a construction analogous to \eqref{eq:costruzioneSd}.

\begin{remark}
    Obviously, the framework defined by Hypothesis \ref{Hyp:base-var} covers the case in which we consider a sequence $\{L_n\}_{n\in\N}$ of non-negative random variables and set \[ \vett L^{(d)} = \left( L_1,\ldots, L_{n_d} \right). \] However, we choose to work in this more general setting, since in many significant examples the distribution of the components of $\vett L^{(d)}$ necessarily depends on $d$; see Examples \ref{contro-esempio} and \ref{Poiss}.
\end{remark}

First of all, we study convergence in probability.

\begin{theorem}\label{thm:C-Prob}
    Assume Hypothesis \ref{Hyp:base-var} holds and there exists $\psi:\R^+\rightarrow\R^+$ such that 
    \begin{align}
          &\mathbb{E}\left[L^{(d)}_i L^{(d)}_j\right]\leq \psi(d),\qquad d\in\N,\, i\neq j\in\{1,\ldots,n_d\},\label{cond-P1}\\
          & \lim_{d\rightarrow+\infty}d^{-1/2}\psi(d)n_d^2=0.\label{cond-P2}
        \end{align} 
        Then for every $\varepsilon> 0$ we have
    \begin{equation}\label{eq:P-lim}
            \lim_{d\rightarrow +\infty}\P\left( \left| |\vett S_{d,n_d}|_2^2-|\vett L^{(d)} |^2_2 \right|> \varepsilon \right)=0.
    \end{equation}
    \end{theorem}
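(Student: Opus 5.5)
We argue by Markov's inequality applied to the first absolute moment of the difference, in the spirit of the proof of Theorem \ref{thm: momenti} with $m=1$. By \eqref{formula-Sdn-Var}, for every $d\geq 2$ we have
\[
\left| |\vett S_{d,n_d}|_2^2-|\vett L^{(d)}|_2^2\right| = \Bigl|\sum_{i\neq j\in\{1,\ldots,n_d\}} L^{(d)}_iL^{(d)}_j\left<\vett U^{(d)}_i,\vett U^{(d)}_j\right>\Bigr| \leq \sum_{i\neq j\in\{1,\ldots,n_d\}} L^{(d)}_iL^{(d)}_j\left|\left<\vett U^{(d)}_i,\vett U^{(d)}_j\right>\right|,
\]
using the triangle inequality and the non-negativity of the lengths. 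Markov's inequality then gives, for every $\varepsilon>0$,
\[
\P\left( \left| |\vett S_{d,n_d}|_2^2-|\vett L^{(d)}|_2^2\right|>\varepsilon\right)\leq \frac{1}{\varepsilon}\sum_{i\neq j\in\{1,\ldots,n_d\}}\E\left[L^{(d)}_iL^{(d)}_j\left|\left<\vett U^{(d)}_i,\vett U^{(d)}_j\right>\right|\right].
\]

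Next we factor each expectation. For $i\neq j$, the pair $(\vett U^{(d)}_i,\vett U^{(d)}_j)$ is independent of $\vett L^{(d)}$ by Hypothesis \ref{Hyp:base-var}. Hence the expectation equals
\[
\E\bigl[L^{(d)}_iL^{(d)}_j\bigr]\,\E\bigl[|\left<\vett U^{(d)}_i,\vett U^{(d)}_j\right>|\bigr].
\]
The first factor is at most $\psi(d)$ by \eqref{cond-P1}. For the second factor, \eqref{eq:d-scal} shows that $\left<\vett U^{(d)}_i,\vett U^{(d)}_j\right>\stackrel{d}{=}V_1$ with $\vett V\sim{\rm Unif}(S^{d-1})$. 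Cauchy--Schwarz together with \eqref{eq:uniform_marg_moments} for $k=1$ then gives
\[
\E|V_1|\leq \sqrt{\E[V_1^2]}=d^{-1/2}.
\]

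The sum has $n_d(n_d-1)\leq n_d^2$ terms, so
\[
\P\left( \left| |\vett S_{d,n_d}|_2^2-|\vett L^{(d)}|_2^2\right|>\varepsilon\right)\leq \frac{1}{\varepsilon}\, d^{-1/2}\psi(d)\,n_d^2,
\]
and the right-hand side tends to $0$ by \eqref{cond-P2}, which proves \eqref{eq:P-lim}.

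There is no real obstacle in this argument. The only points to check are the independence used to factor the expectation and the computation $\E[V_1^2]=1/d$ from \eqref{eq:uniform_marg_moments}. We deliberately use the first moment rather than the exponential tail bound \eqref{eq:v_upperbound}, because nothing beyond \eqref{cond-P1} is assumed about the tails of $L^{(d)}_iL^{(d)}_j$, and this moment bound is what makes the rate $d^{-1/2}$ in \eqref{cond-P2} appear naturally.
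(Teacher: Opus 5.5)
Your proof is correct and follows the paper's argument essentially step for step. Like the paper, you use the triangle inequality on the cross terms of \eqref{formula-Sdn-Var}, then Markov's inequality, then factorization by the independence in Hypothesis \ref{Hyp:base-var}, and finally the bound $\E\bigl[|\langle \vett U^{(d)}_i,\vett U^{(d)}_j\rangle|\bigr]\le d^{-1/2}$ from Cauchy--Schwarz, \eqref{eq:d-scal} and \eqref{eq:uniform_marg_moments}, which yields the estimate $\varepsilon^{-1}\psi(d)\,n_d(n_d-1)\,d^{-1/2}\to 0$.
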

\begin{proof}
 Fix $\varepsilon>0$ and $d\geq 2$. By \eqref{formula-Sdn-Var} we deduce
\begin{align}
\P\left( \left| |\vett S_{d,n_d}|_2^2-|\vett L^{(d)}|^2_2 \right|> \varepsilon \right) &= \P\left( \left| \sum_{i\neq j\in\{1,\ldots, n_d\}}L^{(d)}_iL^{(d)}_j\left<\vett U^{(d)}_i,\vett U^{(d)}_j\right>\right| > \varepsilon \right)\notag\\
&\le \P\left(  \sum_{i\neq j\in\{1,\ldots, n_d\}}  L^{(d)}_i L^{(d)}_j  \left| \left< \vett U^{(d)}_i , \vett U^{(d)}_j \right> \right| > \varepsilon \right),\label{eq:lim_ineq}
\end{align}
by the Markov inequality, \eqref{eq:uniform_marg_moments}, \eqref{eq:d-scal} and \eqref{cond-P1} we have 
\begin{align*}
\P\left( \left| |S_{d,n_d}|_2^2-|\vett L^{(d)}|^2_2 \right|> \varepsilon \right) &\leq \varepsilon^{-1}\psi(d)\sum_{i\neq j\in\{1,\ldots, n_d\}} \mathbb{E}\left[\left| \left< \vett U^{(d)}_i , \vett U^{(d)}_j \right> \right|\right]\\
&\leq \varepsilon^{-1}\psi(d) \sum_{i\neq j\in\{1,\ldots, n_d\}} \mathbb{E}\left[\left| \left< \vett U^{(d)}_i , \vett U^{(d)}_j \right> \right|^2\right]^{1/2}\\
&\leq \dfrac{\varepsilon^{-1}\psi(d)n_d(n_d-1)}{d^{1/2}},
\end{align*}
and therefore \eqref{cond-P2} yields \eqref{eq:P-lim}.
\end{proof}

\begin{remark}
    We note that the assumptions of Theorem \ref{thm:C-Prob} do not guarantee that $\lim_{d\to+\infty}|\vett L^{(d)}|_2^2$ is a real random variable; however, the convergence result still holds even in the case where $|\vett L^{(d)}|_2^2$ diverges, see Example \ref{ex:esplosione}.
\end{remark}

In this framework to prove the a.s. convergence, we cannot reproduce the same proof of Theorem \ref{thm: as} in the fixed number of steps case, since here $n_d$ may go to $+\infty$ when $d\rightarrow +\infty$ and, furthermore length vectors may depend on $d$. A way to tackle the problem is to assume a decay for the mixed moments of $\vett L^{(d)}$ as $d \rightarrow +\infty$. 

% First of all, we prove the following lemma about the tails decay , we will show that we can avoid this problem by requiring higher-order mixed moments $\vett L^{(d)}$ finite, avoiding conditions on the behaviour as $d \rightarrow +\infty$.

\begin{lemma}\label{lem:exp-tot}
Assume Hypothesis \ref{Hyp:base-var} holds. Then for every $\varepsilon>0$ and $d\geq 4$ we have
\begin{equation}\label{eq: stima-code}
\P\left( \left| |\vett S_{d,n_d}|_2^2-|\vett L^{(d)}|^2_2 \right|> \varepsilon \right)\leq 2 n_d^2\sqrt{\frac{d}{ \pi}} \exp\left\{ - \frac{(d-3) }{2d^{2\alpha}} \right\}+\sum_{i\neq j\in\{1,\ldots, n_d\}}\P\left(   L^{(d)}_i L^{(d)}_j  > \frac{d^\alpha\varepsilon}{ n_d(n_d-1)}\right),
\end{equation}
for every $\alpha\in (0,1/2).$
\end{lemma}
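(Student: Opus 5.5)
Starting from \eqref{formula-Sdn-Var}, the difference $|\vett S_{d,n_d}|_2^2-|\vett L^{(d)}|_2^2$ is the cross sum $\sum_{i\neq j}L^{(d)}_iL^{(d)}_j\langle \vett U^{(d)}_i,\vett U^{(d)}_j\rangle$. As in \eqref{eq:lim_ineq}, the probability on the left of \eqref{eq: stima-code} is therefore bounded by
\[
\P\Big(\sum_{i\neq j\in\{1,\ldots,n_d\}}L^{(d)}_iL^{(d)}_j\big|\langle \vett U^{(d)}_i,\vett U^{(d)}_j\rangle\big|>\varepsilon\Big).
\]
The sum has $n_d(n_d-1)$ nonnegative terms. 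If it exceeds $\varepsilon$, then at least one term exceeds $\varepsilon/(n_d(n_d-1))$. A union bound thus reduces the problem to bounding, for each fixed pair $i\neq j$,
\[
\P\Big(L^{(d)}_iL^{(d)}_j\big|\langle \vett U^{(d)}_i,\vett U^{(d)}_j\rangle\big|>\tfrac{\varepsilon}{n_d(n_d-1)}\Big).
\]

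For a fixed pair I would split according to the size of the inner product, with threshold $d^{-\alpha}$. If $|\langle \vett U^{(d)}_i,\vett U^{(d)}_j\rangle|\le d^{-\alpha}$, the product can exceed $\varepsilon/(n_d(n_d-1))$ only if $L^{(d)}_iL^{(d)}_j>d^{\alpha}\varepsilon/(n_d(n_d-1))$. So the event is contained in
\[
\big\{|\langle \vett U^{(d)}_i,\vett U^{(d)}_j\rangle|>d^{-\alpha}\big\}\cup\big\{L^{(d)}_iL^{(d)}_j>d^{\alpha}\varepsilon/(n_d(n_d-1))\big\}.
\]
Summed over the pairs, the second event produces exactly the second sum in \eqref{eq: stima-code}. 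Independence of $\vett L^{(d)}$ and the directions is not even needed here, since only a union bound is used.

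For the first event, \eqref{eq:d-scal} gives $\langle \vett U^{(d)}_i,\vett U^{(d)}_j\rangle\stackrel{d}{=}V_1$ with $\vett V\sim{\rm Unif}(S^{d-1})$. Then \eqref{eq:v_upperbound}, applied with $v=d^{-\alpha}\in(0,1)$ (valid because $d\ge4$ and $\alpha>0$), yields
\[
\P\big(|\langle \vett U^{(d)}_i,\vett U^{(d)}_j\rangle|>d^{-\alpha}\big)\le 2\sqrt{d/\pi}\,\exp\{-(d-3)d^{-2\alpha}/2\}.
\]
Summing over the $n_d(n_d-1)\le n_d^2$ pairs gives the first term of \eqref{eq: stima-code}.

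There is no real obstacle here. The only step needing care is choosing the splitting threshold $d^{-\alpha}$ so that both pieces match the stated form; the restriction $\alpha<1/2$ is what makes the exponential term decay, though it is not needed for the inequality itself.
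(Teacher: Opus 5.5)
Your proof is correct and follows the paper's argument step for step. The steps are: reduction to the sum of absolute cross terms as in \eqref{eq:lim_ineq}, a union bound over the $n_d(n_d-1)$ pairs at level $\varepsilon/(n_d(n_d-1))$, a split at the threshold $d^{-\alpha}$, and then \eqref{eq:d-scal} together with \eqref{eq:v_upperbound}. The only difference is cosmetic: the paper factors the two joint probabilities using the independence of $\vett L^{(d)}$ and the directions and then bounds the extra factors by $1$, whereas your event inclusion shows directly that independence is not needed for this step.
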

\begin{proof}
 Fix $\varepsilon>0$ and $d\geq 4$. We remark that, given a family of random variables $\{Z_{i,j}\}_{i\neq j\in\{1,\ldots, n_d\}} $ we have
	\[
	\begin{split}
		\P\left(  \sum_{i\neq j\in\{1,\ldots, n_d\}} \left| Z_{i,j} \right| > \varepsilon  \right) &\le \P\left( \max_{i\neq j\in\{1,\ldots, n_d\}} \left| Z_{i,j} \right| > \frac{\varepsilon}{n_d(n_d-1)} \right)\\
		&= \P\left( \cup_{i\neq j\in\{1,\ldots, n_d\}} \left\{ \left| Z_{i,j} \right| > \frac{\varepsilon}{n_d(n_d-1)} \right\} \right)\\
		&\le \sum_{i\neq j\in\{1,\ldots, n_d\}} \P\left(  \left\{ \left| Z_{i,j} \right| > \frac{\varepsilon}{n_d(n_d-1)} \right\} \right)\\
	\end{split}
	\]
	then, using this remark in \eqref{eq:lim_ineq}, we get
\[
	\begin{split}
\P\left( \left| |S_{d,n_d}|_2^2-|\vett L^{(d)}|^2_2 \right|> \varepsilon \right)   &\le \sum_{i\neq j\in\{1,\ldots, n_d\}} \P\left(    L^{(d)}_i L^{(d)}_j \left| \left< \vett U_i^{(d)} , \vett U_j^{(d)} \right>  \right| > \frac{\varepsilon}{n_d(n_d-1)}  \right).
	\end{split}
	\]
Fix an arbitrary $\alpha\in (0,1/2)$. Taking into account \eqref{norma-unit}, we have 
\begin{align}
\P\big( &\left| |S_{d,n_d}|_2^2-|\vett L^{(d)}|^2_2 \right|> \varepsilon \big) \le \sum_{i\neq j\in\{1,\ldots, n_d\}}\P\left(    L^{(d)}_i L^{(d)}_j  \left| \left< \vett U_i^{(d)} , \vett U_j^{(d)} \right>  \right| > \frac{\varepsilon}{n_d(n_d-1)} \right)\notag\\
&= \sum_{i\neq j\in\{1,\ldots, n_d\}}\bigg[\P\left(    L^{(d)}_i L^{(d)}_j  \left| \left< \vett U_i^{(d)} , \vett U_j^{(d)} \right>  \right| > \frac{\varepsilon}{n_d(n_d-1)},\, \left| \left< \vett U_i^{(d)} , \vett U_j^{(d)} \right>  \right|\leq \frac{1}{d^\alpha} \right) \notag\\
&+\P\left(    L^{(d)}_i L^{(d)}_j  \left| \left< \vett U_i^{(d)} , \vett U_j^{(d)} \right>  \right| > \frac{\varepsilon}{n_d(n_d-1)},\, \left| \left< \vett U_i^{(d)} , \vett U_j^{(d)} \right>  \right|>\frac{1}{d^\alpha} \right)\bigg]\notag\\
&\leq \sum_{i\neq j\in\{1,\ldots, n_d\}}\bigg[\P\left(    L^{(d)}_i L^{(d)}_j  > \frac{d^\alpha\varepsilon}{ n_d(n_d-1)}\right)\P\left(\left| \left< \vett U_i^{(d)} , \vett U_j^{(d)} \right>  \right|\leq \frac{1}{d^\alpha} \right) \notag\\
&+\P\left( L^{(d)}_i L^{(d)}_j  > \frac{\varepsilon}{n_d(n_d-1)}\right)\P\left(\left| \left< \vett U_i^{(d)} , \vett U_j^{(d)} \right>  \right|> \frac{1}{d^\alpha} \right)\bigg]\notag\\
&\leq \sum_{i\neq j\in\{1,\ldots, n_d\}}\bigg[\P\left(   L^{(d)}_i L^{(d)}_j  > \frac{d^\alpha\varepsilon}{ n_d(n_d-1)}\right)+\P\left(\left| \left< \vett U_i^{(d)} , \vett U_j^{(d)} \right>  \right|> \frac{1}{d^\alpha} \right)\bigg]\label{Stima-P}.
	\end{align}
Finally using \eqref{eq:d-scal} and inequality \eqref{eq:v_upperbound} in \eqref{Stima-P} we deduce \eqref{eq: stima-code}. 
\end{proof}

In the next theorem, applying the previous lemma we can prove the almost sure convergence.

\begin{theorem}\label{thm:C-AS}
    Assume Hypothesis \ref{Hyp:base-var} holds and there exist $k \in \N$, $\delta\in (0,1/2)$ and a function $\psi:\R^+\rightarrow\R^+$ such that 
    \begin{align}
          &\mathbb{E}\left[\left(L^{(d)}_iL^{(d)}_j\right)^k\right]\leq\psi(d),\qquad d\geq 2,\, i\neq j\in\{1,\ldots,n_d\},\label{condAS-P1}\\
          & \lim_{d\rightarrow+\infty}d^{1+\gamma-(1/2-\delta)k}\psi(d)n_d^{2k+2}=0\quad {\rm and}\quad \lim_{d\rightarrow+\infty}d^{1+\gamma}n_d^{2}\exp\left\{ - \frac{(d-3) }{2d^{1-\delta}} \right\}=0\label{condAS-P2},
        \end{align} 
for some $\gamma>0$. Then
   \begin{equation}\label{eq:AS-lim}
	   \P\left(\lim_{d\rightarrow+\infty}|\vett S_{d,n_d}|_2^2-|\vett L^{(d)}|_2^2=0\right)=1.
    \end{equation}
    
    \end{theorem}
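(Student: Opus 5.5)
The plan is to apply Lemma \ref{lem:exp-tot} with a suitable $\alpha$, then Markov's inequality of order $k$ on the length term, and conclude with Borel--Cantelli. The exponent in \eqref{condAS-P2} suggests the choice $2\alpha = 1-\delta$, i.e. $\alpha = (1-\delta)/2$. Since $\delta\in(0,1/2)$, this gives $\alpha\in(1/4,1/2)\subset(0,1/2)$, so the lemma is applicable. Then, for every fixed $\varepsilon>0$ and $d\geq 4$,
\[
\P\left( \left| |\vett S_{d,n_d}|_2^2-|\vett L^{(d)}|^2_2 \right|> \varepsilon \right)\leq 2 n_d^2\sqrt{\frac{d}{\pi}}\exp\left\{ - \frac{(d-3)}{2d^{1-\delta}} \right\}+\sum_{i\neq j}\P\left( L^{(d)}_i L^{(d)}_j > \frac{d^{(1-\delta)/2}\varepsilon}{n_d(n_d-1)}\right).
\]

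Next I bound each summand of the second term with Markov's inequality of order $k$ together with \eqref{condAS-P1}:
\[
\P\left( L^{(d)}_i L^{(d)}_j > \frac{d^{(1-\delta)/2}\varepsilon}{n_d(n_d-1)}\right)\leq \frac{\psi(d)\, n_d^{2k}}{\varepsilon^k d^{(1-\delta)k/2}}.
\]
Summing over the at most $n_d^2$ pairs, the second term is at most $\varepsilon^{-k}\psi(d)n_d^{2k+2}d^{-(1-\delta)k/2}$.

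Now I use \eqref{condAS-P2} to make both terms summable in $d$. The first limit there says that $\psi(d)n_d^{2k+2}d^{-(1/2-\delta)k}=o(d^{-1-\gamma})$. Since $(1-\delta)/2\geq 1/2-\delta$, we have $d^{-(1-\delta)k/2}\le d^{-(1/2-\delta)k}$, so the second term is $o(d^{-1-\gamma})$ and hence summable. The second limit gives $n_d^2\exp\{-(d-3)/(2d^{1-\delta})\}=o(d^{-1-\gamma})$, so the first term is $O(d^{1/2})\cdot o(d^{-1-\gamma})=o(d^{-1/2-\gamma})$.

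This last bound is the main obstacle: $d^{-1/2-\gamma}$ is summable only if $\gamma>1/2$, so the $\sqrt d$ prefactor in \eqref{eq:v_upperbound} is a problem. I see two ways around it. The first is to note that the exponential actually decays like $\exp\{-c\,d^{\delta}\}$, since $(d-3)/d^{1-\delta}\sim d^{\delta}$. It therefore beats any polynomial in $d$, and the extra $\sqrt d$ is harmless as long as $n_d^2$ stays controlled, which the second condition forces at the polynomial scale. The second is to write $n_d^2\sqrt d\, e^{-(\cdot)} = \bigl(n_d^2 d^{1+\gamma}e^{-(\cdot)}\bigr)\cdot d^{-1/2-\gamma}$ and then exploit the exponential decay of $e^{-(\cdot)}$ once more. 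I would try the first route: split the exponential as $e^{-x/2}e^{-x/2}$, apply the hypothesis to one factor (at the cost of a harmless constant adjustment), and use the other factor to absorb $\sqrt d$.

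With both terms summable, $\sum_d \P(||\vett S_{d,n_d}|_2^2-|\vett L^{(d)}|_2^2|>\varepsilon)<\infty$ for every $\varepsilon>0$. Borel--Cantelli then gives that, almost surely, $||\vett S_{d,n_d}|_2^2-|\vett L^{(d)}|_2^2|\le\varepsilon$ eventually. Taking $\varepsilon=1/m$ for $m\in\N$ and intersecting the countably many full-measure events, as in the proof of Theorem \ref{thm: as}, yields \eqref{eq:AS-lim}.
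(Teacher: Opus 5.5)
Your skeleton is exactly the paper's: Lemma \ref{lem:exp-tot} with $\alpha=(1-\delta)/2$, Markov of order $k$ on the length term, and Borel--Cantelli with $\varepsilon=1/m$. Your remark $d^{-(1-\delta)k/2}\le d^{-(1/2-\delta)k}$ correctly absorbs the mismatch of exponents in the first condition. You are also right that the $\sqrt d$ prefactor is a real problem. The second condition in \eqref{condAS-P2} only makes the first term $o(d^{-1/2-\gamma})$, and the paper's one-line proof passes over this silently.

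Neither of your remedies closes the gap. Write $x_d:=(d-3)/(2d^{1-\delta})\sim d^\delta/2$. The hypothesis controls $n_d^2e^{-x_d}$ but does \emph{not} force $n_d$ to be polynomially bounded: it allows $n_d^2$ up to about $d^{-1-\gamma}e^{x_d}$. The first condition does not help either, since $\psi$ may be tiny. Splitting $e^{-x_d}=e^{-x_d/2}e^{-x_d/2}$ would require control of $n_d^2e^{-x_d/2}=(n_d^2e^{-x_d})\,e^{x_d/2}$, and $e^{x_d/2}\approx e^{d^\delta/4}$ is not a ``harmless constant''. Concretely, take deterministic lengths $L^{(d)}_i=e^{-d}$, $k=1$, $\psi(d)=e^{-2d}$ and $n_d^2\approx d^{-3/2}e^{x_d}/\log d$. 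These satisfy both conditions in \eqref{condAS-P2} for every $\gamma\in(0,1/2]$, yet $\sum_d n_d^2\sqrt d\,e^{-x_d}=+\infty$. So the bound \eqref{eq: stima-code} is not summable in general, even though the conclusion holds trivially in this example. Your second route fails for the same reason: once $e^{-x_d}$ has been spent on $d^{1+\gamma}n_d^2$, no decay is left.

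The missing idea is to keep the factor that the proof of Lemma \ref{lem:exp-tot} discards in the last step of \eqref{Stima-P}. By independence, the second piece there is $\P\bigl(L^{(d)}_iL^{(d)}_j>\varepsilon/(n_d(n_d-1))\bigr)\,\P\bigl(|\langle \vett U^{(d)}_i,\vett U^{(d)}_j\rangle|>d^{-\alpha}\bigr)$. Bound the first factor by Markov, giving at most $\varepsilon^{-k}\psi(d)n_d^{2k}$, instead of by $1$. Using $e^{-x_d}\le e^{3/2}e^{-d^\delta/2}$ and summing over pairs, you get
\[
\P\left(\left| |\vett S_{d,n_d}|_2^2-|\vett L^{(d)}|_2^2 \right|>\varepsilon\right)\le\varepsilon^{-k}\psi(d)n_d^{2k+2}\left(d^{-(1-\delta)k/2}+2\sqrt{\frac{d}{\pi}}\,e^{3/2}e^{-d^{\delta}/2}\right).
\]
By the first condition in \eqref{condAS-P2}, $\psi(d)n_d^{2k+2}=o(d^{(1/2-\delta)k-1-\gamma})$. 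Hence the first summand is $o(d^{-1-\gamma})$, and the second is a polynomial in $d$ times $e^{-d^\delta/2}$. Both are summable, and Borel--Cantelli finishes as you describe; the second condition in \eqref{condAS-P2} is then not even needed. Alternatively, your argument as written goes through if you assume $\gamma>1/2$ or polynomial growth of $n_d$, but those are extra hypotheses.
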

\begin{proof}
    Fix $\varepsilon>0$. By \eqref{eq: stima-code}, \eqref{condAS-P1} and the Markov inequality we obtain
    \begin{equation}\label{eq: stima-code-V}
\P\left( \left| |\vett S_{d,n_d}|_2^2-|\vett L^{(d)}|^2_2 \right|> \varepsilon \right)\lesssim 2n_d^{2}\sqrt{\frac{d}{ \pi}} \exp\left\{ - \frac{(d-3) }{2d^{2\alpha}} \right\}+\varepsilon^{-k}d^{-\alpha k}\psi(d)n_d^{2k+2},
\end{equation}
for every $\alpha\in (0,1/2)$. Choosing $\alpha=(1-\delta)/2$, by \eqref{condAS-P2} and Borel-Cantelli lemma \eqref{eq:AS-lim} holds.
\end{proof}
\begin{remark}
   In Example \ref{contro-esempio}, we present a case where the assumptions of Theorem \ref{thm:C-AS} are not satisfied and $\P$-a.s. convergence does not hold. %The key point of this counterexample is that, by allowing the vector $\vett L^{(d)}$ to depend on $d$, we can construct a case in which $\{|S_{d,n}|_2^2\}_{d \in \N}$ is a sequence of independent random variables.
\end{remark}

\section{Examples}

In this section, we present some significant examples.

\begin{example}
	Let $\{n_d\}_{d\in \N}\subseteq \N$ be a non decreasing sequence and, for every $d\geq 2$, let $\vett L^{(d)}= \left(1,1/2,\ldots, 1/n_d \right)$. In this case $|\vett L^{(d)}|_2=\sqrt{\sum_{k=1}^{n_d}k^{-2}}$ for every $d\geq 2$. We note that \eqref{cond-P1} holds with $\psi\equiv 1/2$, hence if $n_d\approx d^{\beta}$ with $\beta<1/4$ then \eqref{cond-P2} holds and therefore Theorem \ref{thm:C-Prob} applies and for every $\varepsilon>0$ we have 
    \begin{equation}
    \lim_{d\rightarrow +\infty}\mathbb P\left(\left||\vett S_{d,n_d}|_2-\sqrt{\sum_{k=1}^{n_d}k^{-2}}\right|>\varepsilon\right)=0.
    \end{equation}
\end{example}

\begin{example}\label{ex:esplosione}
	Let $\{n_d\}_{d\in \N}\subseteq \N$ be a non decreasing sequence and, for every $d\geq 2$, let $\vett L^{(d)}= \left( n_d^{-\eta},\ldots, n_d^{-\eta} \right)$ for some $\eta\geq 0$. In this case $|\vett L^{(d)}|_2=n_d^{1/2-\eta}$ for every $d\geq 2$. If $\eta=1/2$ and $n_d=n\in\N$ for every $d\geq 2$, then Theorem \ref{thm: as} is applicable and therefore 
 \begin{equation*}
    \lim_{d\rightarrow +\infty}|\vett S_{d,n}|_2=1, \qquad \mathbb P{\rm -a.s.}
    \end{equation*}
    In the general case, we need  $n_d\approx d^{\beta}$ with $\beta>0$ and $(1-\eta)\beta<1/4$ to guarantee that the assumptions of Theorem \ref{thm:C-Prob} hold and therefore, for every $\varepsilon>0$, we have
    \begin{equation}\label{divergenza}
    \lim_{d\rightarrow +\infty}\mathbb P\left(\left||\vett S_{d,n_d}|_2-n_d^{1/2-\eta}\right|>\varepsilon\right)=0,
    \end{equation}
    in particular we note that if $0\leq\eta<1/2$ then  $|\vett S_{d,n}|_2$ diverges, and \eqref{divergenza} gives an estimate of the rate of divergence.
\end{example}

\begin{example}\label{ex:rw_gauss_steps}
	In this example, we examine the case of shrinking Pearson random walk studied in \cite{Serino:2010p20599}. In this random walk the lengths of the steps are deterministic and equal to $L_i:=\lambda^{i-1},\,i\ge 1$ and $\lambda\in (0,1)$. From Theorem \ref{thm: as} we deduce that the limiting distribution after $n$ steps is a point mass distribution on $\sqrt{\frac{\left( 1-\lambda^{2n} \right)}{1-\lambda^2}}$.
    It is interesting that in high dimensions, the conclusions of \cite{Serino:2010p20599} on the modes of low--dimensional random walks do not hold.
    \end{example}

\begin{example}\label{ex:rate}
Here we present a significant example in which the rate of convergence $|\vett S_{d,2}|^2_2-|\vett L|_2^2$ is not exponential. Let $n=2$,  $L_1=1$ $\mathbb P$-a.s. and $L_2$ is a discrete random variable with distribution $\rho_a$ given by
\[
\rho_{a}(m)=\frac{M}{m^{a+1}}, \qquad m\in\N,
\]
where $a\in (0,2)$ and $M^{-1} = \sum_{m=1}^{+\infty}m^{-(1+a)}$. In this case, one can explicitly compute a lower bound for the rate of convergence $|\vett S_{d,2}|^2_2-|\vett L|_2^2$. 

For every $\varepsilon\in (0,1)$, by \eqref{formula-Sdn} and \eqref{eq:v_lowerbound} we obtain
\begin{align*}
\mathbb{P}\left(\left\vert|\vett S_{d,2}|^2_2-|\vett L|_2^2\right\vert>\varepsilon\right)&\geq\mathbb{P}\left(2L_2\left|\left<\vett U^{(d)}_1,\vett U^{(d)}_2\right>\right|>\varepsilon\right)\\
&=\sum_{m=1}^{+\infty}\mathbb{P}\left(\left|\left<\vett U^{(d)}_1,\vett U^{(d)}_2\right>\right| > \frac{\varepsilon}{2m}\right)\mathbb{P}\left(L_2=m\right)\\
&\geq  \sqrt{\frac{d}{16 \pi}} M \varepsilon \sum_{m=1}^{+\infty}\exp\left\lbrace-\frac{9 d \varepsilon^2}{16 m^2}\right\rbrace\frac{1}{m^{2+a}}\\
&\ge \sqrt{\frac{d}{16 \pi}} M \varepsilon \sum_{m=\lceil \sqrt{d} \rceil}^{+\infty}\exp\left\lbrace - \frac{9 d \varepsilon^2}{16 m^2} \right\rbrace\frac{1}{m^{2+a}}\\
&\geq \sqrt{\frac{d}{16 \pi}} M \varepsilon \exp\left\lbrace- 9\varepsilon^2/16\right\rbrace\sum_{m=0}^{+\infty}\frac{1}{(m+\lceil \sqrt{d} \rceil)^{2+a}}.
\end{align*}
Finally, since $\int_{0}^{+\infty}(y+\sqrt{d})^{-2-a}dy=(1+a)^{-1}d^{-(1+a)/2}$ we can conclude that
\begin{equation}\label{eq:stima-LB}
\mathbb{P}\left(\left||\vett S_{d,2}|^2_2-|\vett L|_2^2\right|>\varepsilon\right)\gtrsim \frac{1}{(1+a)d^{a/2}}.
\end{equation}
\end{example}

\begin{example}\label{contro-esempio}
    Let $\{X_d\}_{d\in\N}$ be a sequence of independent ${\rm Ber}(1/\sqrt{d})$ random variables. For every $d\geq 2$, let $n_d=2$,  $L_1^{(d)}=L_1=1$ $\mathbb P$-a.s. and $L^{(d)}_2=d^{\beta}X_d$, with $\beta\in\R$. For every $k\in\N$, we have 
    \[
    \mathbb E\left[L^k_1\right]=1,\qquad \mathbb E\left[\left(L^{(d)}_2\right)^k\right]=d^{k\beta-1/2}.
    \]
    Hence, for every $\beta<1$ the assumptions of Theorem \ref{thm:C-Prob} hold. Instead, we need $\beta<1/2$ to guarantee that there exists $k\in\N$ and $\delta\in (0,1/2)$ such that \eqref{condAS-P2} holds and therefore Theorem \ref{thm:C-AS} can be applied. Indeed, we are going to prove that, if $\beta=1/2$, then the sequence
    \[
    Y_d:=|\vett S_{d,2}|^2_2-|\vett L^{(d)}|_2^2=2d^{\beta}X_d\left<\vett U^{(d)}_1,\vett U^{(d)}_2\right>
    \]
   does not converge to $0$ $\mathbb P$-a.s., this fact shows that, in this case, the assumptions of Theorem \ref{thm:C-AS} are sharp for the $\mathbb P$-a.s. convergence. For every $\varepsilon>0$, by \eqref{eq:v_lowerbound} we have
    \begin{align*}
        \mathbb{P}\left(Y_d>\varepsilon\right)&=  \mathbb{P}\left(\sqrt{d}\left|\left<\vett U^{(d)}_1,\vett U^{(d)}_2\right>\right|>\varepsilon\, |\, X_d=1\right)\P\left(X_d=1\right)\\
        &\geq \varepsilon\sqrt{\frac{1}{4\pi d}}\exp\left\{-\frac{9(d-3)\varepsilon^2}{4d}\right\},
    \end{align*}
    hence
    \begin{equation}\label{borel-cantelli-2}
    \sum_{d=1}^{+\infty}\P\left(Y_d>\varepsilon\right)=+\infty.
    \end{equation}
    In this example $\{\vett L^{(d)}\}_{d\in\N}$ is a sequence of independent random variables, hence, taking into account Remark \ref{rmk:gen}, also $\{Y_d\}_{d\in\N}$ is a sequence of independent random variables. By \eqref{borel-cantelli-2} and the second lemma of Borel-Cantelli we conclude
    \[
    \P\left(\limsup_{d\to+\infty}\{Y_d>\varepsilon\}\right)=1.
    \]
\end{example}

\begin{example}\label{Poiss}
Let $n\in\N$ and let $L_1,\ldots, L_n$ be \textnormal{i.i.d.} Bernoulli random variables with parameter $p\in [0,1]$. Then $|\vett L|^2_2$ is a Binomial random variable with parameter $p$ and $n$, so Theorem \ref{thm: as} applies and, in particular, also $\lim_{d\rightarrow +\infty}|\vett S_{d,n}|_2^2$ is a Binomial random variable with parameters $p$ and $n$.\\
In the same way, for every $d\geq 2$  let $\vett L^{(d)}=\left(L^{(d)}_1,\ldots, L^{(d)}_d\right)$ be such that  $L^{(d)}_1,\ldots L^{(d)}_d$ are i.i.d ${\rm Ber}(\lambda/d)$ random variables for some $\lambda>0$. Then \eqref{cond-P1} and \eqref{cond-P2} hold and Theorem \ref{thm:C-Prob} applies and therefore the law of $\lim_{d\rightarrow +\infty}|\vett S_{d,d}|_2^2$ is a Poisson with parameter $\lambda>0$.
\end{example}

\bibliography{biblio}

\end{document}